\newtheorem{theorem}{Theorem}[section]
\newtheorem{proposition}[theorem]{Proposition}
\newtheorem{lemma}[theorem]{Lemma}
\newtheorem{corollary}[theorem]{Corollary}
\theoremstyle{definition}
\newtheorem{definition}[theorem]{Definition}
\numberwithin{equation}{section}
\begin{document}

\baselineskip=15.5pt

\title[Principal bundles with holomorphic connections over a Calabi-Yau manifold]{Principal bundles with 
holomorphic connections over a K\"ahler Calabi-Yau manifold}

\author[I. Biswas]{Indranil Biswas}

\address{Department of Mathematics, Shiv Nadar University, NH91, Tehsil Dadri,
Greater Noida, Uttar Pradesh 201314, India}

\email{indranil.biswas@snu.edu.in, indranil29@gmail.com}

\author[S. Dumitrescu]{Sorin Dumitrescu}

\address{Universit\'e C\^ote d'Azur, CNRS, LJAD, France}

\email{dumitres@unice.fr}

\subjclass[2020]{14J32, 14D20, 14B10}

\keywords{Calabi-Yau manifold, Atiyah bundle, holomorphic connection, pseudostability}

\date{}

\begin{abstract}
We prove that any holomorphic vector bundle admitting a holomorphic connection, over a compact K\"ahler 
Calabi-Yau manifold, also admits a flat holomorphic connection. This addresses a particular case of a question 
asked by Atiyah and generalizes a result previously obtained in \cite{BD} for simply connected compact K\"ahler 
Calabi-Yau manifolds. We give some applications of it in the framework of Cartan geometries and foliated Cartan 
geometries on K\"ahler Calabi-Yau manifolds.
\end{abstract}

\maketitle

\section{Introduction}

A systematic study of holomorphic principal bundles admitting holomorphic connections over compact K\"ahler 
manifolds was initiated by Atiyah in \cite{At}. In particular, he proved that holomorphic vector bundles 
bearing a holomorphic connection over a compact K\"ahler manifold have vanishing real Chern classes of 
positive degree (see \cite[p.~197, Theorem 6]{At}). Therefore, Atiyah asked in \cite{At} whether all 
holomorphic vector bundles admitting a holomorphic connection over a compact K\"ahler manifold also admit a 
{\it flat } holomorphic connection. It may be mentioned that Murakami constructed examples of principal
compact torus bundles over compact torus admitting a holomorphic connection but not admitting any flat
holomorphic connection \cite{Mu1}, \cite{Mu2}. This shows that Atiyah's question can't be extended to
principal bundles with a complex Lie group as a structure group.

Notice that in the specific case where the holomorphic vector bundle is the holomorphic tangent space of a 
compact K\"ahler manifold $M$, Atiyah's question has a positive answer as a consequence of Yau's proof of 
the Calabi conjecture \cite{Ya}. Indeed, if the first real Chern class of a compact K\"ahler manifold
$M$ vanishes, Yau's proof of Calabi conjecture endows $M$ with a 
Ricci-flat metric \cite{Ya}, meaning $M$ is a so-called K\"ahler Calabi-Yau manifold. If the 
second Chern class also vanishes, this implies that the Ricci-flat metric is flat. As a consequence of 
Bieberbach's Theorem, $M$ has a finite unramified cover which is a compact complex torus. Therefore, up to 
a finite unramified covering, the holomorphic tangent bundle $TM$ is holomorphically trivial. This implies that 
$TM$ admits a holomorphic flat connection (with finite monodromy).

Recall that Beauville-Bogomolov decomposition Theorem \cite{Be,Bo} asserts that any K\"ahler Calabi-Yau 
manifold admits a finite unramified cover which is a product of a simply connected K\"ahler Calabi-Yau 
manifold and a compact complex torus.

In this article we positively answer Atiyah's above question for holomorphic vector bundles over K\"ahler 
Calabi-Yau manifolds. More precisely our main result (Theorem \ref{thm1}) proves that for any complex affine 
algebraic group $G$, all holomorphic principal $G$-bundles admitting a holomorphic connection over a compact 
K\"ahler Calabi-Yau manifold, also admit a holomorphic flat connection. Moreover, if the base
manifold is a compact {\it 
simply connected} K\"ahler Calabi-Yau manifold, this implies that the holomorphic principal $G$-bundle is 
holomorphically trivial and it admits a unique holomorphic connection which is flat since it coincides with 
the trivialization (see Corollary \ref{cor1}). This particular case was previously settled in \cite{BD}.

A second part of the article deals with applications to the classification of holomorphic Cartan geometries 
and foliated Cartan geometries on compact K\"ahler Calabi-Yau manifolds.

Recall that compact K\"ahler Calabi-Yau manifolds bearing a holomorphic Cartan geometry are known to be 
covered by compact complex tori \cite{BM,Du}. Moreover, on compact complex tori all holomorphic Cartan 
geometries are translation invariant \cite{BD4}. A foliated version of this result is proved here in Theorem 
\ref{fol Cartan} which states that simply connected compact K\"ahler Calabi-Yau manifolds do not admit 
foliated holomorphic Cartan geometries.

The more flexible notions of {\it branched} holomorphic Cartan geometries and of {\it generalized} Cartan 
geometries were introduced in \cite{BD,BD2}. It was proved in \cite{BD} that a compact simply connected 
K\"ahler Calabi-Yau manifold admits a branched Cartan geometry with model the complex projective space if 
and only if the manifold is projective. The proof is based on a result proving that on any compact simply 
connected K\"ahler Calabi-Yau manifold all branched and generalized holomorphic Cartan geometries are flat. 
We recall those results in Section \ref{sect4} and give a proof of this flatness result (see Corollary 
\ref{flatness}) based on Theorem \ref{thm1} and its Corollary \ref{cor1}.

The organization of this article is the following. The main Theorem \ref{thm1} and its Corollary \ref{cor1} 
are proved in Section \ref{sect2}, while an important technical key ingredient (Proposition \ref{prop1}) is 
proved in Section \ref{sect3}. Section \ref{sect4} presents the notions of branched and generalized Cartan 
geometries and deduce from Theorem \ref{thm1} and its Corollary \ref{cor1} the above flatness result for 
generalized Cartan geometries on compact simply connected K\"ahler Calabi-Yau manifolds (see Corollary 
\ref{flatness}). Section \ref{sect5} deals with foliated Cartan geometries and provides a proof of Theorem 
\ref{fol Cartan}.

\section{A criterion for a flat connection} \label{sect2}

Let $M$ be a compact connected K\"ahler manifold. Take a K\"ahler metric $h$ on $M$; let
$\omega_h$ denote the $(1,\, 1)$--form on $M$ corresponding to $h$.
Since $h$ is a K\"ahler metric, the Levi--Civita connection on $TM$ corresponding to $h$
coincides with the Chern connection on $TM$ corresponding to $h$. The first Chern form on $M$ for
$h$ coincides with the Ricci form for $h$. The K\"ahler metric $h$ is called a \textit{K\"ahler--Einstein}
metric if the Ricci form for $h$ is a constant scalar multiple of $\omega_h$.

Assume that the first Chern class $c_1(TM)\, \in\, H^{2}(M,\, {\mathbb R})$ is zero. Calabi conjectured
that given any K\"ahler class on $M$, there is a K\"ahler form on $M$ lying in this class whose Ricci
curvature vanishes identically. This conjecture was proved by Yau \cite{Ya}. Therefore the class of compact
K\"ahler manifolds with vanishing first Chern class is the same as the class of 
compact K\"ahler manifolds admitting a K\"ahler metric with vanishing Ricci curvature (also called Ricci-flat). The manifolds in this class are known as (K\"ahler) {\it Calabi-Yau manifolds}.

Let $M$ be a compact connected K\"ahler manifold equipped with a K\"ahler form $\omega$. 

Let $G$ be a connected complex affine algebraic group. Recall that an algebraic group $G$ is called complex 
affine if it admits a linear algebraic representation with discrete kernel (for $G$ simply connected, this 
is equivalent to the existence of an injective $\mathbb C$--linear representation of the Lie algebra 
$\mathfrak g$). For example, complex semi-simple Lie group are complex affine
(see \cite[Chapter XVII, Theorem 3.2]{Ho}) and simply connected complex algebraic groups are complex
affine by the classical Ado's Theorem \cite{Hu}.

The Lie algebra of $G$ will be denoted by $\mathfrak g$. Let
\begin{equation}\label{c1}
p\, \,:\,\, E_G\,\,\longrightarrow\,\, M
\end{equation}
be a holomorphic principal $G$--bundle on $M$. The action of $G$ on $E_G$ produces an action of $G$
to the holomorphic tangent bundle $TE_G$. The quotient
\begin{equation}\label{c2}
{\rm At}(E_G)\,\, :=\,\, (TE_G)/G\,\, \longrightarrow\,\, E_G/G \,\,=\,\, M
\end{equation}
is a holomorphic vector bundle on $M$, which is called the Atiyah bundle for $E_G$ \cite{At}.

Let
$$
T_{E_G/M} \, \longrightarrow\, M
$$
be the relative tangent bundle for the projection $p$ in \eqref{c1}, so $T_{E_G/M}$ is the kernel
of the differential $dp\, :\, TE_G \, \longrightarrow\, p^*TM$ of $p$. The above action of $G$ on $TE_G$
preserves the subbundle $T_{E_G/M}$, and the quotient $T_{E_G/M}/G$ is identified with the adjoint
vector bundle $\text{ad}(E_G)$ on $M$ associated to $E_G$ for the adjoint action of $G$ on its Lie algebra
$\mathfrak g$.

On $E_G$, we have the following short exact sequence of holomorphic vector bundles equipped with actions of $G$:
$$
0\, \longrightarrow\, T_{E_G/M} \, \longrightarrow\, TE_G \,
\stackrel{dp}{\longrightarrow}\, p^*TM\,\longrightarrow\, 0.
$$
Taking quotient of it, for the actions of $G$, we get the short exact sequence of holomorphic vector bundle 
\begin{equation}\label{c3}
0\, \longrightarrow\, \text{ad}(E_G) \, \longrightarrow\, \text{At}(E_G) \,
\stackrel{d'p}{\longrightarrow}\, TM\,\longrightarrow\, 0,
\end{equation}
where $d'p$ is induced by $dp$, and $\text{At}(E_G)$ is the Atiyah bundle in \eqref{c2}.

A holomorphic connection of $E_G$ is a holomorphic homomorphism of vector bundles
\begin{equation}\label{c4}
D\,:\, TM \, \longrightarrow\, \text{At}(E_G)
\end{equation}
such that
\begin{equation}\label{c5}
(d'p)\circ D\,\,=\,\, {\rm Id}_{TM},
\end{equation}
where $d'p$ is the projection in \eqref{c3}. Note that the Lie bracket operation of the sheaf of
holomorphic vector fields on $E_G$ produces a Lie algebra structure on the sheaf 
$\text{At}(E_G)$. This Lie algebra structure preserves the subbundle $\text{ad}(E_G)$ in \eqref{c3}.
Indeed, this follows immediately from the fact that the sheaf of vertical vector fields on $E_G$, for the
projection $p$ in \eqref{c1}, is closed under the operation of Lie bracket. Therefore, \eqref{c3} is a short
exact sequence of sheaves of Lie algebras.

Take a holomorphic connection $D$ on $E_G$ (as in \eqref{c4}). The curvature of $D$ is the obstruction
of $D$ to be a Lie algebra structure preserving homomorphism. More precisely, for locally defined
holomorphic vector fields $v$ and $w$ on $M$, define
$$
R(D)(v,\, w) \,=\, [D(v),\, D(w)] - D([v,\, w]).
$$
{}From \eqref{c5} it follows that $d'p R(D)(v,\, w)\,=\, 0$, and also
$$
R(D)(f_1v,\, f_2w) \,=\, f_1f_2\cdot R(D)(v,\, w)
$$
for any locally defined holomorphic functions $f_1$ and $f_2$. Consequently, we have
$$
R(D)\,\, \in\,\, H^0(M,\, \Omega^2_M\otimes\text{ad}(E_G)).
$$
The connection $D$ is called integrable (or flat) if $R(D)\,=\, 0$.

Let $E$ be a holomorphic vector bundle on $M$ of rank $r$. Let $$E_{\text{GL}(r)}\, \longrightarrow\, M$$
be the corresponding holomorphic principal $\text{GL}(r, {\mathbb C})$--bundle. We note that the
fiber of $E_{\text{GL}(r)}$ over any point $x\, \in\, M$ is the space of all linear isomorphisms from
${\mathbb C}^r$ to the fiber $E_x$. Giving a holomorphic connection on the principal $\text{GL}(r,
{\mathbb C})$--bundle $E_{\text{GL}(r)}$ is same as giving a holomorphic differential operator of
order one
$$
D\, :\, E\, \longrightarrow\, E\otimes\Omega^1_M
$$
such that $D(fs) \,=\, fD(s) + s\otimes df$ for all locally defined holomorphic section $s$ of $E$ and
all locally defined holomorphic function $f$ on $M$.

\begin{lemma}\label{lem1}
Let $E_G$ be a holomorphic principal $G$--bundle on a compact connected K\"ahler
manifold $M$ admitting a holomorphic connection.
Then all the characteristic classes of $E_G$ of positive degree, with coefficients in $\mathbb R$,
vanish.
\end{lemma}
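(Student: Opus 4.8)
The strategy is to reduce this to Atiyah's vanishing theorem for vector bundles (\cite[p.~197, Theorem 6]{At}) by means of the Atiyah class. The first observation is that a holomorphic connection $D$ on $E_G$, as in \eqref{c4} and \eqref{c5}, is exactly a holomorphic splitting of the Atiyah exact sequence \eqref{c3}; consequently the \emph{Atiyah class} of $E_G$, that is, the extension class of \eqref{c3} in $H^1(M,\, \Omega^1_M\otimes\text{ad}(E_G))$, vanishes.

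Next I would invoke the Chern--Weil description of characteristic classes in terms of the Atiyah class, due to Atiyah \cite{At}. Fix a $G$-invariant polynomial $P$ of degree $k\,\ge\, 1$ on $\mathfrak g$ and let $P(E_G)\,\in\, H^{2k}(M,\,\mathbb R)$ be the associated characteristic class. On a compact connected K\"ahler manifold two facts hold. First, under the inclusion $H^{2k}(M,\,\mathbb R)\hookrightarrow H^{2k}(M,\,\mathbb C)\,=\,\bigoplus_{p+q=2k}H^{p,q}(M)$ the complexification of $P(E_G)$ lies in the single Hodge summand $H^{k,k}(M)\,=\,H^k(M,\,\Omega^k_M)$, because $P(E_G)$ is represented by applying $P$ to the curvature of a Chern connection on an associated bundle, a form of type $(k,k)$. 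Second, this $(k,k)$-component equals, up to a nonzero universal constant, the image of the $k$-fold cup product $\mathfrak{at}(E_G)^{\cup k}$ under the map $H^k(M,\,(\Omega^1_M)^{\otimes k}\otimes \text{ad}(E_G)^{\otimes k})\,\to\,H^k(M,\,\Omega^k_M)$ induced by $P$ and the wedge product. Since $\mathfrak{at}(E_G)\,=\,0$ by the first paragraph, the second fact gives $P(E_G)_{\mathbb C}\,=\,0$ in $H^{k,k}(M)$; then the first fact yields $P(E_G)_{\mathbb C}\,=\,0$ in $H^{2k}(M,\,\mathbb C)$, and the injectivity of $H^{2k}(M,\,\mathbb R)\to H^{2k}(M,\,\mathbb C)$ forces $P(E_G)\,=\,0$. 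As the real characteristic classes of $E_G$ of positive degree are, in the standard Chern--Weil formalism, polynomial expressions in such classes $P(E_G)$, they all vanish. Alternatively, one may fix a $\mathbb C$-linear representation $\rho\colon G\to\text{GL}(N,\,\mathbb C)$ with discrete kernel (which exists because $G$ is complex affine), observe that the associated holomorphic vector bundle inherits a holomorphic connection from $D$ by functoriality of the Atiyah sequence, and apply \cite[Theorem 6]{At} to it directly.

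The step requiring the most care is the second one: the argument hinges on the fact that, over a compact K\"ahler manifold, the real characteristic classes of $E_G$ are captured after complexification by the Atiyah class concentrated in the single bidegree $(k,k)$, so that their vanishing over $\mathbb C$ --- immediate once $\mathfrak{at}(E_G)\,=\,0$ --- can be transferred back to vanishing over $\mathbb R$ through $H^{2k}(M,\,\mathbb R)\hookrightarrow H^{2k}(M,\,\mathbb C)$. Compactness of $M$ is used for the Hodge decomposition, and the K\"ahler hypothesis both for that decomposition and for Atiyah's Chern--Weil formula; the passage from principal $G$-bundles to the vector bundle case of \cite{At} is essentially formal.
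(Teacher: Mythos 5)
Your main argument is correct, but it takes a different route from the paper: you split the Atiyah sequence to kill the Atiyah class $\mathfrak{at}(E_G)$ and then invoke Atiyah's Chern--Weil formula together with the Hodge decomposition and the injectivity of $H^{2k}(M,\mathbb{R})\hookrightarrow H^{2k}(M,\mathbb{C})$ --- in effect you re-prove \cite[Theorem 6]{At} directly at the level of the principal bundle. The paper instead argues entirely through associated bundles: for \emph{every} finite-dimensional representation $\rho\colon G\to \mathrm{GL}(W)$ the connection $D$ induces a holomorphic connection on $E_W=E_G\times^\rho W$, so $c_i(E_W)=0$ for $i>0$ by citing \cite[Theorem 6]{At} as a black box, and since the characteristic classes of $E_G$ are generated by the Chern classes of all such associated bundles, they all vanish. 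Your route is more self-contained and makes transparent exactly where compactness and the K\"ahler hypothesis enter; the paper's route is shorter and defers all analysis to the cited theorem. One caution about your closing ``alternative'': fixing a \emph{single} representation $\rho$ with discrete kernel and applying \cite[Theorem 6]{At} to the one associated bundle only kills the characteristic classes pulled back from $B\mathrm{GL}(N,\mathbb{C})$ along $\rho$, which need not exhaust $H^*(BG,\mathbb{R})$ for general $G$; this is precisely why the paper quantifies over all $G$-modules (and why your main Chern--Weil argument, which treats an arbitrary invariant polynomial $P$ on $\mathfrak g$, is the safer formulation).
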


\begin{proof}
Take any holomorphic homomorphism
$$
\rho\, :\, G\, \longrightarrow\, \text{GL}(W),
$$
where $W$ is a finite dimensional complex vector space. Let $E_W\,:=\, E_G\times^\rho W\,\longrightarrow\,
M$ be the holomorphic vector bundle on $M$ associated to $E_G$ for the $G$--module $W$. Any holomorphic
connection on $E_G$ induces a holomorphic connection on the associated vector bundle $E_W$. This
implies that the Chern class $c_i(E_W)\,\in\, H^{2i}(M,\, {\mathbb R})$ vanishes for every $i\, >\, 0$
\cite[p.~197, Theorem 6]{At}. Since this holds for all $G$--modules, we conclude that
every characteristic class of $E_G$ of positive degree, with coefficients in $\mathbb R$, vanishes.
\end{proof}

Let $M$ be a compact connected K\"ahler manifold of complex dimension $d$. Fix a K\"ahler form $\omega$
on $M$. The \textit{degree} of any torsionfree coherent analytic sheaf $V$ on $M$ is defined to be
$$
\text{degree}(V)\, :=\, \int_M c_1(\det V)\wedge\omega^{d-1} \, \in\, {\mathbb R}
$$
(see \cite[p.~168, (7.1)]{Ko}, \cite[p.~167, (6.15)]{Ko}), where $\det V$ is the determinant line bundle for
$V$ (see \cite[Ch.~V, \S~6]{Ko} for determinant bundle for a torsionfree coherent analytic sheaf). The
number
$$
\mu(V)\,\, :=\,\, \frac{\text{degree}(V)}{\text{rank}(V)}\,\,\in\, \, {\mathbb R}
$$
is called the \textit{slope} of $V$.

A torsionfree coherent analytic sheaf $V$ on $M$ is called \textit{stable} (respectively, \textit{semistable}) if
$$
\mu(F)\, <\, \mu(V)\, \ \ \text{ (respectively,}\ \mu(F)\, \leq \, \mu(V)\text{)}
$$
for every coherent analytic subsheaf $F\, \subset\, V$ with $0\, <\, \text{rank}(F)\, <\, \text{rank}(V)$
(see \cite[p.~168]{Ko}). A torsionfree coherent analytic sheaf $V$ on $M$ is called \textit{polystable}
if it is a direct sum of stable sheaves of same slope.

A holomorphic vector bundle $V$ on $M$ is called \textit{pseudostable} if there is a filtration of holomorphic
subbundles
$$
0\, =\, F_0\, \subset\, F_1\, \subset\, F_2\, \subset\, \cdots \, \subset\, F_{\ell-1}\, \subset\, F_\ell
\,=\, F
$$
such that 
\begin{enumerate}
\item the vector bundle $F_i/F_{i-1}$ is stable for all $1\, \leq\, i\, \leq\, \ell$, and

\item $\mu(F_1)\,=\, \mu(F_2/F_1)\,=\, \cdots\,=\, \mu(F_{\ell}/F_{\ell-1})$.
\end{enumerate}
(See \cite[p.~23, Definition 2.1]{BG}.)

\begin{proposition}\label{prop1}
Let $M$ be a compact connected K\"ahler manifold equipped with a Ricci flat K\"ahler form $\omega$.
Let $E$ be a holomorphic vector bundle on $M$ admitting a holomorphic connection. Then $E$
is pseudostable.
\end{proposition}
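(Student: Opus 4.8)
The plan is to derive the pseudostability of $E$ from Lemma~\ref{lem1} together with two features of the Ricci flat structure on $M$: the polystability of the cotangent bundle, and the characterization of pseudostable bundles as those semistable bundles whose discriminant integrates to zero. Throughout I set $d\,=\,\dim_{\mathbb C}M$ and $r\,=\,\mathrm{rank}(E)$, with all degrees and slopes taken with respect to $\omega$. First I would record the numerical consequences of the hypothesis: since $E$ carries a holomorphic connection, Lemma~\ref{lem1} gives $c_1(E)\,=\,0$ and $\mathrm{ch}_2(E)\,=\,0$ in $H^{\bullet}(M,\,{\mathbb R})$; hence $\mathrm{degree}(E)\,=\,0$ and $c_2(E)\,=\,0$ in $H^4(M,\,{\mathbb R})$, so in particular $\int_M\big(2r\,c_2(E)-(r-1)c_1(E)^2\big)\wedge\omega^{d-2}\,=\,0$. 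Next, since $\omega$ is Ricci flat, the associated K\"ahler metric is K\"ahler--Einstein with vanishing Einstein constant, so its Chern connection on $TM$ is Hermitian--Einstein; therefore $TM$, and with it $\Omega^1_M$, is polystable of slope $0$. In particular $\mu_{\max}(\Omega^1_M)\,=\,0$.

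The heart of the matter is to prove that $E$ is semistable. Suppose it is not, and let $F\,\subset\, E$ be its maximal destabilizing subsheaf; then $F$ is saturated and semistable with $\mu(F)\,=\,\mu_{\max}(E)\,>\,0$, while $E/F$ is torsionfree, nonzero, and satisfies $\mu_{\max}(E/F)\,<\,\mu(F)$. Fix a holomorphic connection $D$ on $E$. Restricting $D$ to $F$ and composing with the canonical projection $E\otimes\Omega^1_M\,\longrightarrow\,(E/F)\otimes\Omega^1_M$ produces an ${\mathcal O}_M$--linear homomorphism $F\,\longrightarrow\,(E/F)\otimes\Omega^1_M$ — the Leibniz term $s\otimes df$ drops out because $s$ is a local section of $F$. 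Using $\mu_{\max}(\Omega^1_M)\,=\,0$ and the subadditivity of $\mu_{\max}$ under tensor product, one has $\mu_{\max}\big((E/F)\otimes\Omega^1_M\big)\,\le\,\mu_{\max}(E/F)\,<\,\mu(F)\,=\,\mu_{\min}(F)$, which forces this homomorphism to vanish. Equivalently $D$ carries $F$ into $F\otimes\Omega^1_M$, i.e.\ it induces a holomorphic connection on the coherent sheaf $F$. Restricting to the dense open subset $U\,\subset\, M$ on which $F$ is a subbundle — its complement is analytic of complex codimension at least two — we obtain a holomorphic connection on the line bundle $(\det F)|_U$, whence $c_1(\det F)$ vanishes in $H^2(U,\,{\mathbb R})$; since the restriction map $H^2(M,\,{\mathbb R})\,\longrightarrow\, H^2(U,\,{\mathbb R})$ is injective, $\mathrm{degree}(F)\,=\,0$, contradicting $\mu(F)\,>\,0$. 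Hence $E$ is semistable, and $\mu(E)\,=\,\mathrm{degree}(E)\,=\,0$.

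It then remains to upgrade semistability to pseudostability. At this point $E$ is a semistable holomorphic vector bundle of slope $0$ on $(M,\,\omega)$ with $\int_M\big(2r\,c_2(E)-(r-1)c_1(E)^2\big)\wedge\omega^{d-2}\,=\,0$. It is a standard consequence of the theory of Hermitian--Einstein connections (Uhlenbeck--Yau, Bando--Siu) that such a bundle is pseudostable: it admits a filtration by holomorphic subbundles whose successive quotients are locally free, stable with respect to $\omega$, and all of slope $0$, each of these quotients carrying a projectively flat Hermitian--Einstein metric. This is precisely the notion of pseudostability recalled above, and the proof would be complete.

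I expect the middle step to be the main obstacle. One has to make the vanishing of the second fundamental form rigorous although the maximal destabilizing subsheaf is only generically a subbundle — this is what forces the passage to the open set $U$ and the codimension estimate — and the slope comparison rests on the (nontrivial, characteristic-zero) subadditivity of $\mu_{\max}$ under tensor product together with the polystability of $\Omega^1_M$ provided by the Ricci flat metric. The concluding appeal to the Bando--Siu structure theorem is routine but should be quoted in the precise form yielding a filtration by honest \emph{subbundles} with locally free stable quotients.
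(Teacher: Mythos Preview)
Your argument is correct, with one cosmetic slip: pseudostability is not \emph{characterized} among semistable bundles by the vanishing of $\int_M\Delta(E)\wedge\omega^{d-2}$ --- any stable bundle is trivially pseudostable regardless of its Bogomolov number --- but you only use the implication ``semistable with vanishing Bogomolov number $\Rightarrow$ pseudostable'', and that implication is true (it follows by induction on the rank from the Bogomolov splitting for a degree-zero saturated subsheaf together with Bando--Siu's theorem that a stable reflexive sheaf with vanishing Bogomolov number is locally free; this is essentially Simpson's argument).

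Your route and the paper's coincide up to the point where $E$ is known to be semistable with $c_1(E)=c_2(E)=0$: the paper simply cites \cite{Bi} for semistability, while you reproduce that argument. They diverge in the construction of the filtration. You invoke the general structure theorem above, which makes no further use of the connection. The paper instead builds the filtration by hand using $D$: starting from a polystable degree-zero saturated subsheaf $V\subset E$, it first runs the same Bogomolov/Hodge-index splitting and applies \cite{BS} to see that $V$ is locally free; then, rather than citing a black box, it iteratively enlarges $V$ by pulling back the image of the second fundamental form $V\otimes TM\to E/V$ (and its successors) until the resulting subsheaf is $D$-invariant --- at which point it is an honest subbundle --- and passes to the quotient with its induced connection. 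Your approach is shorter and isolates exactly where the connection is needed (only for the vanishing of Chern classes and for semistability); the paper's is more self-contained and shows concretely how $D$ and the polystability of $TM$ force each step of the Jordan--H\"older tower to be a subbundle. The final ``routine'' citation in your last paragraph is precisely where the paper does its real work.
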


Proposition \ref{prop1} will be proved in the next section.

\begin{theorem}\label{thm1}
Let $M$ be a compact connected K\"ahler manifold equipped with a Ricci flat K\"ahler form $\omega$.
Let $E_G$ be a holomorphic principal $G$-bundle over $M$ admitting a holomorphic connection. Then $E_G$ admits a
flat holomorphic connection.
\end{theorem}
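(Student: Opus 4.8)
The plan is to reduce the principal bundle statement to the vector bundle case handled by Proposition \ref{prop1}, and then exploit the fact that a pseudostable bundle with vanishing rational Chern classes (of positive degree) on a Calabi--Yau manifold carries a flat unitary-like structure via the Bando--Siu / Uhlenbeck--Yau correspondence. Concretely, I would first fix a faithful algebraic representation $\rho\colon G\hookrightarrow \mathrm{GL}(W)$ with discrete (hence finite, since $G$ is connected and the image is an algebraic subgroup) kernel; this exists precisely because $G$ is complex affine algebraic. Let $E_W := E_G\times^\rho W$ be the associated vector bundle. A holomorphic connection on $E_G$ induces a holomorphic connection on $E_W$, so by Proposition \ref{prop1} the bundle $E_W$ is pseudostable, and by Lemma \ref{lem1} all its real Chern classes of positive degree vanish.

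The key input I would then invoke is that on a compact K\"ahler manifold a pseudostable vector bundle is a successive extension of stable bundles of the same slope, hence its associated graded is polystable; since $E_W$ has vanishing $c_1$ and $c_2$ (the relevant classes vanish by Lemma \ref{lem1}), each stable summand is Hermitian--Einstein with vanishing Chern classes, hence admits a flat unitary connection by the Bando--Siu theorem together with the Chern class characterization of flatness (a polystable bundle with $c_1=0=c_2$ over a compact K\"ahler manifold comes from a unitary representation of $\pi_1(M)$). One must then upgrade this from the associated graded to $E_W$ itself: a pseudostable bundle with $c_1=0=c_2$ is itself given by a (not necessarily unitary, but still flat) representation of $\pi_1(M)$ — equivalently it is a flat bundle — because the extension classes between the flat graded pieces are controlled by $H^1$ with coefficients in flat bundles, and one checks the Higgs/harmonic-bundle machinery (or directly the argument in \cite{BD}) forces flatness. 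So $E_W$ admits a flat holomorphic connection, i.e. $E_W$ is associated to a representation $\widetilde\rho\colon\pi_1(M)\to\mathrm{GL}(W)$.

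The remaining, and I expect most delicate, step is to descend the flat structure from $E_W$ back to $E_G$. The subtlety is that $E_G$ reduces to the subgroup $G\subset\mathrm{GL}(W)$, but an arbitrary flat structure on $E_W$ need not be compatible with this reduction. Here I would use that $G$ is the Zariski closure of a subgroup commensurable with the image, and more importantly that the holomorphic connection we started with on $E_G$ already gives a holomorphic (not yet flat) $G$-reduction of the flat structure data; combining the two, the monodromy representation $\widetilde\rho$ can be conjugated into $G$ up to a compact correction, using that the image of $\pi_1(M)$ in $\mathrm{GL}(W)$ preserves both the flat structure and the holomorphic $G$-reduction. A cleaner route — and the one I would actually pursue — is to apply the Beauville--Bogomolov decomposition: $M$ has a finite unramified cover $M'=T\times N$ with $T$ a complex torus and $N$ simply connected Calabi--Yau; on $N$ the pullback of $E_G$ is holomorphically trivial (by Corollary \ref{cor1}, or its proof), so the pullback of $E_G$ to $M'$ is pulled back from the torus $T$, and principal $G$-bundles with holomorphic connections on a complex torus that have vanishing Chern classes are (by a direct Hodge-theoretic argument, as the Atiyah class lives in $H^1(T,\mathrm{ad}(E_G)\otimes\Omega^1)$ and the connection form can be taken translation-invariant with constant, hence flat, curvature — and the curvature vanishes because $R(D)\in H^0(T,\Omega^2\otimes\mathrm{ad}(E_G))$ pairs trivially given the vanishing characteristic classes) given by a flat connection coming from a homomorphism $\pi_1(T)\to G$. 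Pushing this flat connection back down to $M$, via averaging over the finite Galois group of $M'/M$ (which acts on the space of connections), produces a flat holomorphic connection on $E_G$ itself. The main obstacle throughout is ensuring that flatness — not merely the Hermitian--Einstein / polystability property — survives each of these operations (associated bundle $\to$ graded $\to$ ungraded, pullback $\to$ descent), and this is exactly where the Calabi--Yau hypothesis, through vanishing of $c_1$ and $c_2$ and the structure of $\pi_1$, is indispensable.
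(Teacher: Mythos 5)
Your reduction to the vector bundle case via a faithful representation, and the verification that $E_W$ is pseudostable with vanishing Chern classes, is consistent with the ingredients the paper assembles (Lemma \ref{lem1} and Proposition \ref{prop1}). But the paper does not then pass through $E_W$ at all: it applies Proposition \ref{prop1} to the adjoint bundle $\mathrm{ad}(E_{L(G)})$ of the bundle obtained by extending the structure group to the Levi quotient $L(G)=G/R_u(G)$, and then invokes the criterion of Biswas--G\'omez \cite[Theorem 1.1]{BG}, which states precisely that a holomorphic principal $G$-bundle with vanishing real characteristic classes and pseudostable $\mathrm{ad}(E_{L(G)})$ admits a flat holomorphic connection. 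That citation is exactly the tool that handles the step you correctly identify as ``the most delicate'': descending a flat structure from an associated vector bundle back to the principal $G$-bundle. Your first route leaves this step as a genuine gap --- the assertion that the monodromy of the flat structure on $E_W$ ``can be conjugated into $G$ up to a compact correction'' is not an argument; a flat vector bundle whose underlying holomorphic bundle admits a holomorphic reduction to $G$ need not have monodromy reducible to $G$, and making this work requires the Tannakian machinery of \cite{BG}, not a pointwise compatibility of the two structures.

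Your second route has two further problems. First, it is circular inside this paper: Corollary \ref{cor1} is deduced from Theorem \ref{thm1}, so you may not use it (you would have to cite the independent proof in \cite{BD} instead, and also justify separately that triviality on each slice $\{t\}\times N$ implies the bundle on $T\times N$ is pulled back from $T$, which for principal bundles is not automatic). Second, and more seriously, the final descent step fails: averaging a flat connection over the Galois group of the finite cover $M'\to M$ does not produce a flat connection, because the curvature of a $G$-connection is quadratic in the connection form for non-abelian $G$; there is no reason a Galois-invariant flat connection exists just because a flat connection exists upstairs. (Your intermediate claim that the curvature of a translation-invariant connection on a torus vanishes ``because it pairs trivially with the vanishing characteristic classes'' is also unsubstantiated --- $H^0(T,\Omega^2_T\otimes\mathrm{ad}(E_G))$ is typically nonzero on a torus, and vanishing of characteristic classes does not force a holomorphic section to vanish.) The missing idea, in short, is the principal-bundle criterion of \cite{BG} applied to $\mathrm{ad}(E_{L(G)})$, which is what converts the pseudostability supplied by Proposition \ref{prop1} directly into a flat connection on $E_G$ without ever leaving the category of $G$-bundles.
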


\begin{proof}
Let $R_u(G)\, \subset\, G$ be the unipotent radical of $G$. So the Levi quotient
$$
L(G)\,=\, G/R_u(G)
$$
is reductive \cite[p.~125]{Hu}. Let
$$
q\, :\, G\, \longrightarrow\, L(G)
$$
be the quotient map. Let
$$
E_{L(G)}\,\,:=\, E_G\times^q L(G)\,\, \longrightarrow\,\, M
$$
be the holomorphic principal $L(G)$--bundle on $M$ obtained by extending the structure group
of $E_G$ using $q$.

Take a holomorphic connection $D$ on the principal $G$--bundle $E_G$. The holomorphic connection on the
principal $L(G)$--bundle induced by $D$ will be denoted by $D_L$. The holomorphic connection on the
adjoint vector bundle $\text{ad}(E_{L(G)})$ induced by $D_L$ will be denoted by $\widehat{D}_L$.
Now from Proposition \ref{prop1} we know that the adjoint vector bundle $\text{ad}(E_{L(G)})$ is
pseudostable.

{}From Lemma \ref{lem1} we know that all the characteristic classes of $E_G$ of positive degree, with 
coefficients in $\mathbb R$, vanish.

Now from \cite[p.~20, Theorem 1.1]{BG} it follows that the principal $G$--bundle $E_G$ admits a
flat holomorphic connection.
\end{proof}

Notice that in Theorem \ref{thm1} the condition on $G$ to be a complex affine algebraic group is essential. 
Indeed, Proposition 4.3 in \cite{BD2} constructs an elliptic bundle over a $K3$ surface which admits a 
holomorphic connection but does not admit any holomorphic flat connection.

Observe that the main result in \cite{BD} (Theorem 6.2) is the following direct consequence of Theorem 
\ref{thm1}:

\begin{corollary}\label{cor1}
Let $M$ be a compact simply connected K\"ahler manifold equipped with a Ricci flat K\"ahler form $\omega$. 
Let $E_G$ be a holomorphic principal $G$-bundle over $M$ admitting a holomorphic connection. Then $E_G$ is 
the trivial holomorphic principal $G$-bundle over $M$, and the holomorphic connection on it coincides with 
the trivialization.
\end{corollary}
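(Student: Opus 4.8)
The plan is to apply Theorem \ref{thm1} to get a flat holomorphic connection on $E_G$ and then use simple connectedness to upgrade this to holomorphic triviality together with the compatibility statement. First I would invoke Theorem \ref{thm1}: since $M$ is compact K\"ahler with a Ricci flat K\"ahler form and $E_G$ admits a holomorphic connection, $E_G$ admits a flat holomorphic connection $D_0$. A flat holomorphic connection on a holomorphic principal $G$--bundle over $M$ has a monodromy representation $\rho\, :\, \pi_1(M)\, \longrightarrow\, G$, and $E_G$ together with $D_0$ is recovered as the holomorphic bundle with connection associated to the universal cover of $M$ via $\rho$. Since $M$ is simply connected, $\pi_1(M)$ is trivial, so $\rho$ is trivial, and hence $(E_G,\, D_0)$ is isomorphic to the trivial principal $G$--bundle $M\times G$ equipped with its trivial (product) connection. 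In particular $D_0$ is the connection induced by the trivialization.

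It remains to address the phrase ``the holomorphic connection on it coincides with the trivialization'', which should be read as: \emph{every} holomorphic connection on the trivial bundle $M\times G$ is the trivial one. This is where the Calabi--Yau hypothesis enters a second time. Two holomorphic connections on a principal $G$--bundle differ by a global holomorphic section of $\Omega^1_M\otimes \text{ad}(E_G)$; here $\text{ad}(E_G)$ is the trivial bundle $M\times\mathfrak g$, so the difference lies in $H^0(M,\, \Omega^1_M)\otimes\mathfrak g$. The key point is that on a compact simply connected K\"ahler Calabi-Yau manifold, $H^0(M,\, \Omega^1_M)\,=\, 0$: indeed $H^0(M,\, \Omega^1_M)\,\cong\, H^1(M,\, \mathcal O_M)$ by Hodge symmetry, and $b_1(M)\,=\, 0$ by simple connectedness, so this space vanishes. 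Therefore the trivial bundle carries a unique holomorphic connection, which is automatically flat, completing the proof.

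The only genuine subtlety — rather than an obstacle — is making precise the passage from ``flat holomorphic connection'' to ``holomorphically trivial bundle with product connection'' in the holomorphic category: one must note that the Riemann--Hilbert correspondence identifies flat holomorphic connections on holomorphic $G$--bundles with representations of $\pi_1$, and that a trivial representation yields the trivial holomorphic bundle with its canonical flat connection (not merely a $C^\infty$--trivialization). This is standard, so the argument above is essentially a two-line deduction from Theorem \ref{thm1} plus the vanishing $H^0(M,\, \Omega^1_M)\,=\, 0$; there is no real difficulty to overcome.
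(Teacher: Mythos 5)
Your proof is correct and follows essentially the same route as the paper: Theorem \ref{thm1} gives a flat holomorphic connection, simple connectedness trivializes the monodromy and hence the bundle, and the vanishing of $H^0(M,\,\Omega^1_M)$ forces the unique connection on the trivial bundle to be the trivialization. One cosmetic remark: the vanishing of holomorphic $1$-forms uses only that $M$ is compact K\"ahler and simply connected (via $b_1(M)=0$ and Hodge symmetry), so the Calabi--Yau hypothesis does not actually ``enter a second time'' there, exactly as in the paper's own argument.
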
 

\begin{proof}
By Theorem \ref{thm1}, $E_G$ admits a holomorphic flat connection. Since $M$ is simply connected, this 
implies $E_G$ is the trivial holomorphic principal $G$-bundle over $M$. Any connection on the trivial bundle 
is determined, with respect to the trivialization, by a holomorphic section of $\Omega^1_M \otimes {\mathfrak 
g}$. But compact simply connected K\"ahler manifolds do not admit nontrivial holomorphic $1$-forms. Therefore, 
the only holomorphic section of the holomorphic vector bundle $\Omega^1_M \otimes {\mathfrak g}$ is the 
trivial one. The corresponding holomorphic connection on the trivial holomorphic $G$-bundle identifies with 
the trivialization.
\end{proof} 

\section{Proof of Proposition \ref{prop1}} \label{sect3}

Since $\omega$ is a Ricci-flat K\"ahler form on $M$, it follows that the K\"ahler form $\omega$
is Hermitian--Einstein. L\"ubke proved that any vector bundle on a compact K\"ahler manifold admitting
an Hermitian--Einstein structure is polystable \cite[p.~245, THEOREM]{Lu} (see also
\cite[p.~177--178, Theorem 8.3]{Ko}). Therefore, the tangent bundle $TM$ is polystable with respect
to any Ricci-flat K\"ahler structure on $M$.

Let $E$ be a holomorphic vector bundle on $M$ equipped with a holomorphic connection $D$.

Since $TM$ is polystable with $c_1(TM)\,=\, 0$, and $E$ admits a holomorphic
connection, it follows that the vector bundle $E$ is semistable \cite[p.~2830]{Bi}.

A coherent analytic sheaf $W$ on $M$ is called reflexive if the natural homomorphism
$W\, \longrightarrow\, W^{**}$ is an isomorphism \cite[p.~151]{Ko}.

Let $V\, \subset\, E$ be a polystable subsheaf such that
\begin{itemize}
\item $\text{degree}(V)\,=\, 0$, and

\item the quotient $E/V$ is torsionfree.
\end{itemize}
The second condition implies that $V$ is reflexive (see \cite[p.~153, Proposition 4.13]{Ko}). Since
$\text{degree}(V)\,=\, 0\,=\, \text{degree}(E)$, it follows that $\text{degree}(E/V)\,=\, 0$. From
the given conditions that $E$ is semistable and $V$ is semistable with 
$\text{degree}(V)\,=\, 0\,=\, \text{degree}(E)$, it can be deduced that $E/V$ is semistable. To see
this, take any coherent analytic subsheaf $W\, \subset\, E/V$. Let $E'\, \subset\, E$ be the inverse
image of $W$ for the quotient map $E\, \longrightarrow\, E/V$. Then $E'$ fits in the following
short exact sequence of coherent analytic sheaves:
$$
0\, \longrightarrow\, V\, \longrightarrow\, E'\, \longrightarrow\, W \, \longrightarrow\, 0.
$$
Since $E$ is semistable of degree zero, we have $\text{degree}(E')\, \leq\, 0$. Since we also have
$\text{degree}(V)\,=\, 0$, from the above exact sequence it follows that $\text{degree}(W)\, \leq\, 0$.
Hence $E/V$ is semistable.

Let $d$ be the complex dimension of $M$. The ranks of $V$ and $E/V$ will be denoted
by $m$ and $n$ respectively. 
Since $V$ and $E/V$ are semistable, we have the Bogomolov inequality
\begin{equation}\label{g1}
((2m\cdot c_2(V)-(m-1)c_1(V)^2)\cup \omega^{d-2})\cap [M]\, \geq\, 0\, ,
\end{equation}
\begin{equation}\label{g2}
((2n\cdot c_2(E/V)-(n-1)c_1(E/V)^2)\cup \omega^{d-2})\cap [M]\, \geq\, 0
\end{equation}
\cite[Lemma 2.1]{BM}.

We will show that the inequalities in \eqref{g1} and \eqref{g2} are equalities.
Denote the sheaf $E/V$ by $W$. We have
$$
2(m+n)c_2(V\oplus W)- (m+n-1)c_1(V\oplus W)^2
$$
$$
=\,
2(m+n)(c_2(V)+c_2(W)+c_1(V)c_1(W))- (m+n-1)(c_1(V)^2+c_1(W)^2+2c_1(V)c_1(W))
$$
$$
=\, \frac{m+n}{m}(2mc_2(V)-(m-1)c_1(V)^2)+\frac{m+n}{s}(2mc_2(W)-(n-1)c_1(W)^2)
$$
$$ 
-\frac{n}{m}c_1(V)^2-\frac{m}{n}c_1(W)^2 + 2c_1(V)c_1(W)
$$
$$
\,=\,\frac{m+n}{m}(2mc_2(V)-(m-1)c_1(V)^2)+\frac{m+n}{n}(2mc_2(W)-(n-1)c_1(W)^2) 
$$
$$
-\, \frac{1}{mn}(n\cdot c_1(V)-m\cdot c_1(W))^2\, .
$$
On the other hand, $c_i(V\oplus W)\,=\, c_i(E)\,=\, 0$, so
$$
\frac{m+n}{m}((2m\cdot c_2(V)-(m-1)c_1(V)^2)\cup \omega^{d-2})\cap [M]
$$
$$
+\, \frac{m+n}{n}((2mc_2(W)-(n-1)c_1(W)^2)\cup \omega^{d-2})\cap [M]
$$
\begin{equation}\label{g3}
-
\frac{1}{mn}((n\cdot c_1(V)-m\cdot c_1(W))^2 \cup \omega^{d-2})\cap [M]
\,=\,0\, .
\end{equation}
We have
$$
(c_1(V)\cup \omega^{d-1})\cap [M]\,=\, 0\,=\, (c_1(W)\cup \omega^{d-1})\cap [M]
$$
because $\text{degree}(V)\,=\, 0\,=\, \text{degree}(W)$. Hence
$$
((n\cdot c_1(V)-m\cdot c_1(W))\cup \omega^{d-1})\cap [M] \, =\, 0\, .
$$
Therefore, from Hodge index theorem (see \cite[Section 6.3]{Vo}) it follows that
$$
((n\cdot c_1(V)-m\cdot c_1(W))^2 \cup \omega^{d-2})\cap [M]
\, \leq\, 0\, .
$$
In other words, in \eqref{g3},
$$
-\frac{1}{mn}((n\cdot c_1(V)-m\cdot c_1(W))^2 \cup \omega^{d-2})\cap [M]
\, \geq\, 0\, .
$$
Therefore, from \eqref{g3} we conclude that the inequalities in \eqref{g1} and \eqref{g2}
are equalities.

Since $((2m\cdot c_2(V)-(m-1)c_1(V)^2)\cup \omega^{d-2})\cap [M]\, =\, 0$, from
\cite[p.~40, Corollary~3]{BS} we conclude that $V$ is a polystable {\it vector bundle}
admitting a projectively flat unitary connection. Therefore, the projective bundle $P(V)$
for $V$ is given by a representation of $\pi_1(M)$ in $\text{PU}(m)$.

First assume that $V$ is preserved by the connection $D$ on $E$. Then $V$ is a 
subbundle of $E$, and the quotient $E/V$ has a holomorphic connection $D_1$ induced 
by $D$. Consequently, we may substitute $(E/V,\, D_1)$ in place in $(E,\, D)$ in the
above arguments. Note that we now have
\begin{equation}\label{r1}
\text{rank}(E/V,\, D_1)\,\, <\,\, \text{rank}(E,\, D).
\end{equation}

We will now show that if $V$ is \textit{not} preserved by the connection $D$ on $E$, then also
the rank can be lowered.

Assume that $V$ is not preserved by the connection $D$ on $E$. Therefore, the following composition of
homomorphisms
\begin{equation}\label{r2}
V\,\hookrightarrow\, E \, \stackrel{D}{\longrightarrow}\, E\otimes\Omega^1_M
\, \xrightarrow{\,\,\, q_V\otimes {\rm Id}_{\Omega^1_M}\,\,\,}\, (E/V)\otimes\Omega^1_M,
\end{equation}
where $q_V\, :\, E\, \longrightarrow\, E/V$ is the quotient map, is nonzero. The composition
of homomorphisms in \eqref{r2} is ${\mathcal O}_M$--linear, and hence it gives a nonzero section
$$
S_V\, \in\, H^0(M,\, \text{Hom}(V,\, E/V)\otimes \Omega^1_M).
$$ 
Let
\begin{equation}\label{de1}
\Phi\, :\, V\otimes TM\, \longrightarrow\, E/V
\end{equation}
be the homomorphism given by this section $S_V$. Since $V$ and $TM$ are polystable vector bundles
of degree zero, it follows that $V\otimes TM$ is also a polystable vector bundle of degree zero.
Indeed, both $V$ and $TM$ admits Hermitian--Einstein connections as they are polystable. Now,
the connection on $V\otimes TM$ induced by Hermitian--Einstein connections on $V$ and $TM$ is
also Hermitian--Einstein. Hence $V\otimes TM$ is polystable. The degree of $V\otimes TM$ is zero
because both $V$ and $TM$ are of degree zero. On the other hand, $E/V$ is torsionfree
and it is semistable of degree zero. Hence ${\rm Image}(\Phi)$ is a polystable vector bundle of
degree zero, where $\Phi$ is the homomorphism in \eqref{de1}.

Since ${\rm Image}(\Phi)$ is polystable vector bundle of degree zero, and $E/V$ is a torsionfree
semistable of degree zero, it follows that the quotient
\begin{equation}\label{d2}
(E/V)/{\rm Image}(\Phi)
\end{equation}
is torsionfree semistable sheaf of degree zero.

If $\text{rank}({\rm Image}(\Phi))\,=\, \text{rank}(E/V)$, then it can be shown
that $${\rm Image}(\Phi)\,=\, E/V.$$ To see this, consider the subsheaf
\begin{equation}\label{r5}
V_1\, :=\, q^{-1}_V({\rm Image}(\Phi)) \, \subset\, E,
\end{equation}
where $q_V$ is the projection in \eqref{r2}. Assume that $\text{rank}({\rm Image}(\Phi))\,=\, \text{rank}(E/V)$.
Then the inclusion map
\begin{equation}\label{r6}
\iota\, :\, V_1\, \longrightarrow\, E
\end{equation}
is an isomorphism over a nonempty open subset of $M$. Consider the homomorphism
$$
\bigwedge\nolimits^r \iota\, :\, \det (V_1)\,=\, \bigwedge\nolimits^r V_1\, \longrightarrow\,
\bigwedge\nolimits^r E\,=\, \det (E),
$$
where $r\,=\, \text{rank}(E)$ and $\iota$ is the homomorphism in \eqref{r6}. It gives a section
$$
\widetilde{\iota}\, \in\, H^0(M,\, \det (E)\otimes \det (V_1)^*).
$$
Let
$$
{\mathbb D}\,\,:=\,\, \text{Div}(\widetilde{\iota})
$$
be the divisor of the section $\widetilde{\iota}$. Since the divisor $\mathbb D$ is effective, we
have
$$
\text{degree}({\mathcal O}_M({\mathbb D})) \,\geq\, 0,
$$
and
\begin{equation}\label{r7}
\text{degree}({\mathcal O}_M({\mathbb D})) \,=\, 0
\end{equation}
if and only if ${\mathbb D}\,=\, 0$. But
$$
\text{degree}({\mathcal O}_M({\mathbb D}))\,=\, \text{degree}(\det (E)\otimes \det (V_1)^*)
\,=\, \text{degree}(E)-\text{degree}(V_1)\,=\,0,
$$
and hence \eqref{r7} holds. Consequently, ${\mathbb D}\,=\, 0$ and therefore $\iota$ in \eqref{r6}
is an isomorphism.

Note that
$$
(E/V)/{\rm Image}(\Phi)\,=\, E/V_1
$$
(see \eqref{d2} and \eqref{r5}), and hence $E/V_1$ is torsionfree semistable sheaf of degree zero.

For $V_1$ in \eqref{r5}, consider the composition of homomorphisms
$$
V_1\, \hookrightarrow\, E \, \stackrel{D}{\longrightarrow}\, E\otimes\Omega^1_M.
$$
This composition of homomorphisms maps $V\, \subset\, V_1$ to $V_1\otimes \Omega^1_M$, and hence
it produces a homomorphism
$$
V_1/V \, \longrightarrow\, (E/V_1)\otimes\Omega^1_M.
$$
This in turn defines a homomorphism
\begin{equation}\label{r3}
\Phi_1\, :\, (V_1/V)\otimes TM \, \longrightarrow\, E/V_1 .
\end{equation}

Now we have two possibilities, as before. In the first case, $\Phi_1\,=\, 0$, where $\Phi_1$ is the
homomorphism in \eqref{r3}. This implies that the holomorphic connection $D$ on $E$ preserves $V_1$.
Consequently, $V_1$ is a subbundle of $E$ (see, for instance, Lemma 4.5 in \cite{BD2}). This implies that $V$ is a subbundle of $E$. So both
$V$ and $V_1/V\,=\, {\rm Image}(\Phi)$ are polystable vector bundles of degree zero.
Now we can replace the pair $(E,\, D)$ by the pair $(E/V_1,\, {\mathcal D})$, where
$\mathcal D$ is the holomorphic connection on $E/V_1$ induced by $D$. As in \eqref{r1}, we have
$$
\text{rank}(E/V_1,\, {\mathcal D})\,\, <\,\, \text{rank}(E,\, D).
$$

In the second case, $\Phi_1\, \not=\, 0$, where $\Phi_1$ is the
homomorphism in \eqref{r3}. Since both $TM$ and $V_1/V\,=\, {\rm Image}(\Phi)$ are polystable vector
bundles of degree zero, we conclude that $(V_1/V)\otimes TM$ is also a polystable vector bundle
of degree zero. Since $E/V_1$ is semistable of degree zero, and $(V_1/V)\otimes TM$ is also a polystable
vector bundle of degree zero, we conclude that ${\rm image}(\Phi_1)$ is a polystable vector
bundle of degree zero.

Let $q_1\, :\, E\, \longrightarrow\, E/V_1$ be the quotient map. Consider the subsheaf
$$
V_2\, :=\, q^{-1}_1({\rm image}(\Phi_1))\, \subset\, E.
$$
Repeat the arguments with $V_1$ for $V_2$. This way we get, inductively, a filtration of subbundles of $E$
$$
V_0\,=\, V\, \subset \, V_1 \subset\, V_2\, \subset\, \cdots \, \subset\,
V_{\ell-1}\, \subset\, V_\ell \,=\, V
$$
such that 
\begin{enumerate}
\item the vector bundles $V$ and $V_i/V_{i-1}$, $1\,\leq\, i\, \leq\, \ell$,
are all stable, and

\item $\mu(V)\,=\, \mu(V_1/V)\,=\, \mu(V_2/V_1)\,=\, \cdots\,=\, \mu (V_{\ell}/V_{\ell-1})$.
\end{enumerate}
Hence $E$ is pseudostable.
This completes the proof of Proposition \ref{prop1}.

\section{Cartan geometries on Calabi-Yau manifolds} \label{sect4}

A Cartan geometry is a geometric structure infinitesimally modelled on a homogeneous space.

Let $G$ be a connected complex Lie group and $H\, \subset\, G$ a connected
complex Lie subgroup. The Lie algebras of $H$ and $G$ will be denoted by
$\mathfrak h$ and $\mathfrak g$ respectively.

The following classical definition of a Cartan geometry is an infinitesimal generalization of the standard 
fibration $G \,\longrightarrow\, G/H$ seen as a principal $H$-bundle and endowed with the left-invariant 
Maurer-Cartan form of $G$ (see \cite{Sh}). We give the definition in the complex analytic category.

\begin{definition}\label{def}
A holomorphic Cartan geometry of type $(G,H)$ on a {complex} manifold $M$ is a holomorphic principal
$H$--bundle $\pi \,: E_H\, \longrightarrow\, M$ endowed with 
 a $\mathfrak g$--valued holomorphic $1$--form $\eta$ on $E_H$ satisfying the following
conditions:
\begin{enumerate}
\item $\eta\, :\, TE_H\, \longrightarrow\, E_H\times{\mathfrak g}$ is an
isomorphism.

\item $\eta$ is $H$--equivariant with $H$ acting on $\mathfrak g$ via the adjoint representation.

\item the restriction of $\eta$ to each fiber of $\pi$ coincides with the Maurer--Cartan form
associated to the action of $H$ on $E_H$.
\end{enumerate}

The Cartan geometry is called {\it flat} if its curvature $K(\eta)\,=\, d \eta+ \frac{1}{2} 
\lbrack \eta,\, \eta \rbrack$ vanishes.
\end{definition}

Note that Definition \ref{def} implies that the holomorphic tangent bundle of $E_H$ is parallelizable; this 
parallelization is defined by the $\eta$-constant holomorphic sections of $TE_H$. The family of 
$\eta$-constant holomorphic sections of $TE_H$ with values in the Lie subalgebra $\mathfrak h$ trivializes 
the relative tangent bundle of the fibration $\pi$. When restricted to the relative tangent bundle
for $\pi$, the form 
$\eta$ is a Lie algebra isomorphism defined on the family of $\eta$-constant vector fields and with values 
in $\mathfrak h$. Moreover, $\eta$ defines a Lie algebra isomorphism from the family of $\eta$-constant 
vector fields to $\mathfrak g$ if and only if the Cartan geometry defined by $\eta$ is flat in the sense of 
Definition \ref{def}.

Holomorphic affine and projective connections, holomorphic conformal structures and holomorphic Riemannian 
metrics can be defined as holomorphic {\it Cartan geometries} \cite{Sh}.

A more general notion of {\it a branched Cartan geometry} (invariant by pull-back through branched 
holomorphic maps) was introduced in \cite{BD}, as a higher dimensional (non flat) generalization of 
Mandelbaum's definition of branched complex affine and complex projective structures on Riemann surfaces.
Moreover an even more general definition of {\it a generalized Cartan geometry} (invariant by the pull-back 
through any holomorphic map) was introduced and worked out in \cite{BD2} (see also \cite{AM}).

\begin{definition} \label{general}
A generalized holomorphic Cartan geometry of type $(G,\,H)$ on a {complex} manifold $M$ is a holomorphic
principal $H$--bundle $\pi \,: E_H\, \longrightarrow\, M$ endowed with 
a $\mathfrak g$--valued holomorphic $1$--form $\eta$ on $E_H$ satisfying the following conditions:
\begin{enumerate}

\item $\eta$ is $H$--equivariant with $H$ acting on $\mathfrak g$ via the adjoint representation.

\item the restriction of $\eta$ to each fiber of $\pi$ coincides with the Maurer--Cartan form
associated to the action of $H$ on $E_H$.
\end{enumerate}

If $\eta \, :\, TE_H\, \longrightarrow\, E_H\times{\mathfrak g}$ is an isomorphism over a dense open set in 
$M$, the generalized Cartan geometry is a called branched Cartan geometry.

The generalized Cartan geometry is called {\it flat} if its curvature $K(\eta)\,=\, d \eta+ \frac{1}{2} 
\lbrack \eta,\, \eta \rbrack$ vanishes.
\end{definition}

Let us now give an equivalent bundle theoretical definition of generalized Cartan geometries (see also 
\cite{BD,BD2}).

Let us first define 
\begin{equation}\label{e2}
E_G\, \,:=\,\, E_H\times^H G \,\, \stackrel{\pi_G}{\longrightarrow}\,\, M
\end{equation}
as being the holomorphic principal $G$--bundle over $M$ constructed by extending the
structure group of $E_H$ using the inclusion of $H$ in $G$. More precisely, $E_G$ is the
quotient of $E_H\times G$ where two points $(c_1,\, g_1),\, (c_2,\, g_2)\, \in\,
E_H\times G$ are identified if there is an element $h\, \in\, H$ such that
$c_2\,=\, c_1h$ and $g_2\,=\, h^{-1}g_1$. The projection $\pi_G$ in \eqref{e2} is induced
by the following projection map $E_H\times G\, \longrightarrow\, M$, $(c,\, g)\,\longmapsto\, \pi(c)$, where $\pi : E_H \longrightarrow M$ is the projection map for the bundle $E_H$.
The action of $G$ on $E_G$ is induced by the
action of $G$ on $E_H\times G$ given by the right--translation action of $G$ on itself (as second factor).

Denote by ${\rm ad}(E_G)$ the adjoint bundle for $E_G$.

Equivalently a generalized holomorphic Cartan geometry of type $(G,H)$ is defined by a homomorphism 

$$\overline{\eta} \,:\,{\rm At}(E_H) \,\longrightarrow \,{\rm ad}(E_G)$$ from the Atiyah
bundle for $E_H$ to the adjoint bundle for $E_G$ making the following diagram of holomorphic vector bundles over
$M$ commutative 
\begin{equation}\label{e6a}
\begin{matrix}
0 & \longrightarrow & {\rm ad}(E_H) & {\longrightarrow} & {\rm At}(E_H)
& \stackrel{d'p}{\longrightarrow} & TM & \longrightarrow & 0\\
&& \Vert && ~\Big\downarrow\overline{\eta} && ~\Big\downarrow \phi\\
0 & \longrightarrow & {\rm ad}(E_H) & \stackrel{\iota_1}{\longrightarrow} & {\rm ad}(E_G)
& \longrightarrow & {\rm ad}(E_G)/{\rm ad}(E_H) & \longrightarrow & 0
\end{matrix}
\end{equation}
where the first line is the exact sequence defined in (\ref{c3}), while the second line is given by the canonical embedding $\iota_1$ of the adjoint bundle for $E_H$ into the adjoint bundle for $E_G$.
The homomorphism $\phi$ in \eqref{e6a}
is induced by $\overline{\eta}$.

In the classical case the homomorphism $\overline{\eta}$ is required to be an isomorphism (which is 
equivalent for $\phi$ in (\ref{e6a}) to be an isomorphism), while in the branched case $\overline{\eta}$ is 
required to be an isomorphism over a dense open set in $M$ (in which case $\phi$ in (\ref{e6a}) is an 
isomorphism over the same dense open set in $M$). In the classical and in the branched case this implies 
that the homogeneous space model $G/H$ has the same dimension as the manifold $M$. This is not required 
anymore in the generalized case.

It was proved in \cite{BD,BD2} that a generalized Cartan geometry $\overline{\eta}$ defines a unique 
holomorphic connection on the principal $G$-bundle $E_G$ which is flat if and only if the Cartan geometry 
is flat (see also \cite{Sh} for the classical case).

Therefore Theorem \ref{thm1} has the following consequence (see also Corollary 4.6 in \cite{BD2}).

\begin{corollary}\label{flatness}
Let $M$ be a compact simply-connected Calabi-Yau manifold endowed with a generalized holomorphic Cartan 
geometry $(E_H,\, \eta)$ modelled on $(G,H)$, with $G$ being a complex affine algebraic group. Then $(E_H, 
\eta)$ is flat. If $\eta$ is a branched Cartan geometry, $G/H$ must be compact and the branching set must 
be nontrivial.
\end{corollary}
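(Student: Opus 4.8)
The plan is to deduce the corollary from Theorem \ref{thm1}, its Corollary \ref{cor1}, and the bundle-theoretic reformulation of generalized Cartan geometries recalled above. For the flatness assertion: the generalized Cartan geometry $(E_H,\, \eta)$ produces the homomorphism $\overline\eta$ of \eqref{e6a} and hence a holomorphic connection on the associated principal $G$-bundle $E_G\,=\, E_H\times^H G$ which, by the result of \cite{BD,BD2} quoted above, is flat if and only if $(E_H,\, \eta)$ is flat. Since $M$ is a compact simply connected K\"ahler Calabi-Yau manifold it carries a Ricci-flat K\"ahler form (Yau), and since $G$ is a complex affine algebraic group, Corollary \ref{cor1} applies to $E_G$: the bundle $E_G$ is holomorphically trivial and the induced connection coincides with the trivialization, hence is flat. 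Therefore $(E_H,\, \eta)$ is flat.

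For the second assertion I would assume in addition that $\eta$ is a branched Cartan geometry, so that $\phi$ in \eqref{e6a} is an isomorphism over a dense open subset of $M$; by the discussion preceding the corollary this forces $n\,:=\,\dim_{\mathbb C}M\,=\,\dim_{\mathbb C}G/H$. Using Corollary \ref{cor1} I would fix a trivialization $E_G\,\cong\, M\times G$ in which the connection is the trivial one; then the $H$-reduction $E_H\,\hookrightarrow\, E_G$ becomes a holomorphic map $f\,:\, M\,\longrightarrow\, G/H$, with $E_H$ identified with the pullback by $f$ of the tautological $H$-bundle $G\,\to\, G/H$ and $\eta$ with the pullback of the Maurer--Cartan form. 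Under the canonical isomorphism ${\rm ad}(E_G)/{\rm ad}(E_H)\,\cong\, f^*T(G/H)$ the homomorphism $\phi$ of \eqref{e6a} is then identified with the differential $df$ (this is the standard passage from a flat Cartan geometry to its developing map; see \cite{Sh,BD,BD2}), so $df$ is an isomorphism over a dense open subset $U\,\subset\, M$.

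Next I would invoke properness. As $M$ is compact, $f$ is proper, so Remmert's proper mapping theorem shows $f(M)$ is a closed analytic subset of $G/H$; it is irreducible since $M$ is connected, and of dimension $n$ since $f|_U$ is a local biholomorphism. A closed irreducible analytic subset of dimension $n$ in the connected $n$-dimensional complex manifold $G/H$ equals $G/H$, so $f$ is surjective and $G/H$ is compact --- the first half of the second claim. To see that the branching locus $\mathcal B\,=\,\{x\in M\,:\, df_x\ \text{is not an isomorphism}\}$ is nontrivial (for $n\,\geq\, 1$), suppose $\mathcal B\,=\,\emptyset$; then $f$ is an everywhere local biholomorphism between compact connected complex manifolds, hence a finite unramified covering, so the pullback $H^\bullet(G/H,\,{\mathbb R})\,\longrightarrow\, H^\bullet(M,\,{\mathbb R})$ is injective, and from $f^*c_1(T(G/H))\,=\, c_1(TM)\,=\, 0$ we get $c_1(T(G/H))\,=\, 0$. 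But a compact complex homogeneous space of a connected affine algebraic group is a projective rational homogeneous variety, in particular Fano, so $c_1(T(G/H))\,\neq\, 0$ for $n\,\geq\, 1$ --- a contradiction; hence $\mathcal B\,\neq\,\emptyset$.

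I expect the main obstacle to be the identification in the second paragraph: one has to unwind Definition \ref{general} together with the triviality of the connection on $E_G$ to produce the map $f$ and verify that $\phi$ is literally $df$. The remaining ingredients are Remmert's theorem and the classical structure theory of compact homogeneous spaces of affine algebraic groups, which is precisely what forces the branching locus to be nonempty; the degenerate case $n\,=\, 0$, where $M$ is a point and the assertion about the branching set is vacuous, should be put aside at the start.
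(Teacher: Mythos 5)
Your proposal is correct and follows essentially the same route as the paper: flatness of the induced connection on $E_G$ via Theorem \ref{thm1}/Corollary \ref{cor1}, then the developing map ${\rm dev}\,:\,M\,\longrightarrow\,G/H$, surjectivity by compactness of $M$, and the first--Chern--class obstruction to the branching set being empty. The only difference is that you spell out details the paper leaves implicit (Remmert's proper mapping theorem, and the fact that a compact homogeneous space of a connected affine algebraic group is rational homogeneous, hence has $c_1\,\neq\,0$ --- the paper simply asserts that $G/H$ does not have trivial first Chern class).
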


Indeed, the holomorphic connection defined by the Cartan geometry $(E_H, \eta)$ on the extended principal 
$G$-bundle bundle $E_G$ must be flat by Theorem \ref{thm1}. Hence the Cartan geometry $(E_H,\, \eta)$ is 
flat. Since $M$ is simply connected, a flat geometry on $M$ with model $(G,H)$ is determined by a 
holomorphic map ${\rm dev} : M \longrightarrow G/H$ called the developing map (see \cite{BD,BD2,Sh}). In 
the branched case $G/H$ has the same dimension as $M$ and ${\rm dev}$ is a dominant map. This implies the 
developing map is surjective and $G/H$ is compact. The branching set must be nontrivial: if not $M$ would 
be a cover of $G/H$, but $G/H$ does not have trivial first Chern class (as $M$ does).

It is an open question to classify compact complex homogeneous spaces $G/H$ which admit a finite ramified 
cover which is a K\"ahler Calabi-Yau manifold $M$. In the case where $G/H$ is the complex projective space, 
the manifold $M$ must be Moishezon and being K\"ahler it is projective. Conversely, any projective manifold 
admits a holomorphic finite map onto the complex projective space (see \cite{BD,BD2} for more details).
The question of classification of flag manifolds admitting a finite ramified cover with trivial first Chern 
class is still open.

It should be mentioned that it was proved in \cite{BD4} that Cartan geometries on complex tori are 
translation invariant. They are not necessarily flat. It was proved in \cite{BD5} that generic 
(translation invariant) Cartan geometries with model the complex projective space on compact complex tori 
are not flat. Theorem \ref{thm1} goes into the direction to prove that K\"ahler Calabi-Yau manifolds 
admitting generalized Cartan geometries with model $(G,H)$, also admit flat generalized Cartan geometries 
with model $(G,H)$.

\section{Foliated Cartan geometries on Calabi-Yau manifolds}\label{sect5}

Here $M$ is a compact K\"ahler Calabi-Yau manifold endowed with a nonsingular holomorphic foliation 
$\mathcal F$.

In \cite[Section 2.2]{BD1} we defined a notion of transversal holomorphic (branched) Cartan geometry to a 
holomorphic foliation $\mathcal F$. Roughly speaking, this means that local transversals to the foliation 
are endowed with compatible geometric structures which are infinitesimally modelled on a given homogeneous 
space $G/H$, with $G$ complex connected Lie group and $H$ complex connected Lie subgroup in it. There 
exists a natural tensor of curvature and when this tensor vanishes, the transversal Cartan geometry is 
flat, meaning that local transversals to the foliation are locally modelled on the homogeneous space $G/H$ 
with transition maps given by the canonical (holomorphic) $G$-action on $G/H$. In this case the pull-back 
of $\mathcal F$ to the universal cover $\tilde{M}$ of $M$ is defined by a submersion $\tilde M 
\longrightarrow G/H$ called the developing map of the transverse flat Cartan geometry (see section 3.2 in 
\cite{BD1}). The more general notion of transversal generalized Cartan geometry was worked out in 
\cite{BD3}.

Let us present the bundle theoretic definition of a transversal generalized Cartan geometry (as defined in 
\cite{BD1,BD3}).

In this foliated setting, $M$ is a complex manifold endowed with a holomorphic nonsingular foliation 
$\mathcal F$ and $E_H$ is a holomorphic principal $H$-bundle over $M$.

Define the foliated Atiyah bundle as the subbundle of the Atiyah bundle $\text{At}(E_H)$ defined by 
\begin{equation}\label{atF}
\text{At}_{\mathcal F}(E_H)\, :=\, ({d'p})^{-1}(T{\mathcal F})\,\subset\,
\text{At}(E_H)\, ,
\end{equation}

where $d'p$ was introduced in (\ref{c3}).
So from \eqref{c3} we get the short exact sequence
\begin{equation}\label{at2}
0\, \longrightarrow\, \text{ad}(E_H)\, \longrightarrow\,\text{At}_{\mathcal F}(E_H)\,
\stackrel{\mathrm{d}'\pi}{\longrightarrow}\, T{\mathcal F}\, \longrightarrow\, 0\, ,
\end{equation}
with $\mathrm{d}'\pi$ is the restriction of $d'p$
in \eqref{c3} to the subbundle $\text{At}_{\mathcal F}(E_H)$.

Recall that the principal $H$-bundle $E_H$ is said to admit {\it a partial holomorphic connection}\index{partial connection on a principal bundle} in the direction of $\mathcal F$ if the exact sequence 
(\ref{at2}) splits, meaning there exists a holomorphic homomorphism
$$
\lambda \, :\, T{\mathcal F}\, \longrightarrow\, \text{At}_{\mathcal F}(E_H)
$$
such that $\mathrm{d}'\pi\circ\lambda\,=\, \text{Id}_{T\mathcal F}$, where
$\mathrm{d}'\pi $ is the projection homomorphism in \eqref{at2}. The usual notion of a holomorphic connection over $M$ corresponds to the case where $\mathcal F$ is of codimension zero.

As it was first observed by Ehresmann the curvature of the partial connection $\lambda \, :\, T{\mathcal 
F}\, \longrightarrow\, \text{At}_{\mathcal F}(E_H)$ coincides with the obstruction to the integrability of 
the image of $\lambda$.

The connection $\lambda$ is
called {\it flat } if the image of $\lambda$ is integrable.

The foliated Atiyah bundle ${\rm At}_{\mathcal F}(E_H)$ being a subbundle of ${\rm At}(E_H)$, the partial connection 
$\lambda\, :\, {T\mathcal F}\, \longrightarrow\, \text{At}_{\mathcal F}(E_H)$ induces a unique
homomorphism $\lambda'\,:\,{T\mathcal F}\, \longrightarrow\, \text{At}(E_H)$ and from \eqref{c3} we get the following exact sequence
\begin{equation}\label{at3}
0\, \longrightarrow\, \text{ad}(E_H)\, \stackrel{\iota'}{\longrightarrow}\,
\text{At}(E_H)/\lambda'({T\mathcal F})\,
{\longrightarrow}\, TM/{T\mathcal F}\,
=\, {\mathcal N}_{\mathcal F}\, \longrightarrow\, 0\,.
\end{equation}

The following statement was proved in \cite{BD1} (see also Lemma 11.5.1 in \cite{BD3}).

\begin{lemma}[{\cite{BD1,BD3}}]\label{lemma1}
The flat partial connection $\lambda$ on $E_H$ induces a unique flat partial
connection along $\mathcal F$ on ${\rm At}(E_H)/\lambda'({T\mathcal F})$ such that the
homomorphisms in the exact sequence \eqref{at3} are partial connection preserving (where $\text{ad}(E_H)$ is endowed with the canonical connection induced from $(E_H,\lambda)$ and $TM/{T\mathcal F}$ is endowed with its canonical Bott connection).
\end{lemma}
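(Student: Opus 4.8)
The plan is to produce the desired partial connection on $\text{At}(E_H)/\lambda'(T\mathcal F)$ as the adjoint action, inside the sheaf of Lie algebras $\text{At}(E_H)$, of the integrable subbundle $\lambda'(T\mathcal F)$ on this quotient. Recall from Section \ref{sect2} that the Lie bracket of holomorphic vector fields on $E_H$ makes $\text{At}(E_H)$ a sheaf of Lie algebras, with $d'p$ as anchor and $\text{ad}(E_H)$ as an ideal (the bracket of a vertical field with any section of $\text{At}(E_H)$ is again vertical), and recall that the flatness of the partial connection $\lambda$ means exactly that $\lambda'(T\mathcal F)\,\subset\,\text{At}(E_H)$ is closed under this bracket. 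Hence, for local sections $v$ of $T\mathcal F$ and $s$ of $W\,:=\,\text{At}(E_H)/\lambda'(T\mathcal F)$, choosing any local lift $b$ of $s$ to $\text{At}(E_H)$, the formula
\[
\nabla_v s\ :=\ [\lambda'(v),\, b]\ \ \text{mod}\ \ \lambda'(T\mathcal F)
\]
is independent of the choice of $b$, precisely because $\lambda'(T\mathcal F)$ is bracket--closed. This is the candidate partial connection.

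First I would check that $\nabla$ is a flat partial connection along $\mathcal F$ on $W$. The Leibniz rule reduces to the identity $[\lambda'(v),\, fb]\,=\,f\,[\lambda'(v),\, b]+(v\cdot f)\,b$ in $\text{At}(E_H)$, valid for $f\,\in\,\mathcal O_M$ since $\lambda'(v)$ lifts to a holomorphic vector field on $E_H$ projecting to $v$, while $\mathcal O_M$--linearity in $v$ is clear; flatness holds because $\nabla$ underlies a genuine Lie algebroid action — the self--action of $\text{At}(E_H)$ by bracket restricts to $\lambda'(T\mathcal F)$ and descends to $W$, the subalgebra property being used once more — so its curvature vanishes by the Jacobi identity. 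Next I would verify that the two homomorphisms of \eqref{at3} are connection preserving: the canonical partial connection on $\text{ad}(E_H)$ induced by $(E_H,\, \lambda)$ sends $\sigma$ to $[\lambda'(v),\, \sigma]$, which is again a section of $\text{ad}(E_H)$, so $\nabla_v(\iota'\sigma)\,=\,\iota'([\lambda'(v),\, \sigma])$; and if $b$ lifts to a $G$--invariant vector field $\widetilde b$ on $E_H$ projecting to $w\,=\,d'p(b)$ while $\lambda'(v)$ lifts to $\widetilde a$ projecting to $v$, then $[\widetilde a,\, \widetilde b]$ projects to $[v,\, w]$, so the image of $\nabla_v s$ in $\mathcal N_{\mathcal F}$ equals the class of $[v,\, w]$, which is exactly the derivative of the image of $s$ under the Bott connection of $\mathcal N_{\mathcal F}$ (well defined thanks to the integrability of $\mathcal F$). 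Uniqueness then follows from the explicit nature of the formula: any partial connection on $W$ for which \eqref{at3} is compatible must agree with $\nabla$ on $\iota'(\text{ad}(E_H))$ and induce the Bott connection on $\mathcal N_{\mathcal F}$, and one checks that additionally imposing flatness pins it down to $\nabla$.

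The step that requires the most care, and really the only nontrivial input, is the translation of ``flatness of $\lambda$'' into ``$\lambda'(T\mathcal F)$ is bracket--closed in $\text{At}(E_H)$''; everything downstream of it, including the appearance of the Bott connection on $\mathcal N_{\mathcal F}$ and of the canonical connection on $\text{ad}(E_H)$, is a bookkeeping exercise juggling three compatible brackets — the one on vector fields of $E_H$, the one on vector fields of $M$ recovered through $d'p$, and the adjoint bracket on $\text{ad}(E_H)$ — and checking that $d'p$ (equivalently $\mathrm{d}'\pi$ on the foliated Atiyah bundle) intertwines them. I expect no genuine obstacle beyond keeping these brackets and the various $G$--invariant lifts to $E_H$ consistent, and beyond making sure that the connection one writes down is manifestly the unique flat one turning both arrows of \eqref{at3} into morphisms, rather than a twist of it by a Hom--valued leafwise $1$--form.
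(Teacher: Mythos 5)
The paper does not prove Lemma \ref{lemma1} at all: it is imported from \cite{BD1,BD3} (Lemma 11.5.1 of the survey), so there is no in-text argument to compare against. Your construction is, however, exactly the one used in those references: set $\nabla_v s\,=\,[\lambda'(v),\,b]\ \mathrm{mod}\ \lambda'(T\mathcal F)$ for any lift $b$ of $s$, well defined because flatness of $\lambda$ is, by the paper's own definition, the integrability (bracket--closedness) of $\lambda'(T\mathcal F)$ inside the Lie algebra sheaf $\mathrm{At}(E_H)$. Your checks of the Leibniz rule, of $\mathcal O_M$--linearity in $v$ (the correction term $-(d'p(b)f)\,\lambda'(v)$ is killed by the quotient), of flatness via Jacobi combined with $[\lambda'(v),\lambda'(w)]\,=\,\lambda'([v,w])$, and of compatibility with the two arrows of \eqref{at3} (bracket with a vertical invariant field is the canonical connection on $\mathrm{ad}(E_H)$; pushing forward by $d'p$ gives the Bott connection on $\mathcal N_{\mathcal F}$) are all sound. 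This is the intended proof of the existence part.

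The one step I would not accept as written is uniqueness. If $\nabla$ and $\nabla+A$ both make the two arrows of \eqref{at3} connection preserving, then $A(v)$ annihilates $\iota'(\mathrm{ad}(E_H))$ and takes values in it, so $A$ is an $\mathcal O_M$--linear map $T\mathcal F\longrightarrow \mathrm{Hom}(\mathcal N_{\mathcal F},\,\mathrm{ad}(E_H))$; since then $A(u)\circ A(v)\,=\,0$, the curvatures differ by the leafwise exterior covariant derivative $d^{\nabla}A$, and imposing flatness of both only forces $A$ to be leafwise $d^{\nabla}$--closed, not zero. So ``flat and compatible with \eqref{at3}'' does not by itself pin the connection down, contrary to your closing claim that flatness does the job; you correctly sensed this danger (``a twist by a $\mathrm{Hom}$--valued leafwise $1$--form'') but then asserted rather than ruled it out. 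The uniqueness in the statement should be read as the canonicity of the bracket construction --- it is determined by $\lambda$ and the Lie bracket with no further choices --- not as a characterization by the listed properties; if you want a genuine uniqueness statement you must either add a hypothesis forcing $H^0$ of such closed forms to vanish or record the connection together with its construction.
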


{\it A transversal holomorphic Cartan geometry} with model $(G,\, H)$ on the foliated manifold $(M,\, 
{\mathcal F})$ is given by the following data

I) A holomorphic principal $H$-bundle $E_H$ on $M$ equipped with a flat partial connection $\lambda$ along 
$\mathcal F$;

II) A holomorphic homomorphism

\begin{equation}\label{beta}
\beta\,:\, \text{At}(E_H)/\lambda'(T{\mathcal F})\, \longrightarrow\,
\text{ad}(E_G)\, ,
\end{equation}

such that the following three conditions hold:
\begin{enumerate}
\item $\beta$ is partial connection preserving;

\item $\beta$ is an isomorphism;

\item the following diagram is commutative:
\begin{equation}\label{cg1}
\begin{matrix}
0 &\longrightarrow & \text{ad}(E_H) &\stackrel{\iota'}{\longrightarrow} &
\text{At}(E_H)/\theta'(T{\mathcal F}) &
\longrightarrow & {\mathcal N}_{\mathcal F} &\longrightarrow & 0\\
&& \Vert &&~ \Big\downarrow\beta && ~ \Big\downarrow\overline{\beta}\\
0 &\longrightarrow & \text{ad}(E_H) &\stackrel{\iota_1}{\longrightarrow} &
\text{ad}(E_G) &\longrightarrow &
\text{ad}(E_G)/\text{ad}(E_H) &\longrightarrow & 0
\end{matrix}
\end{equation}
\end{enumerate}
where the top exact sequence is the one in \eqref{at3}, while the bottom exact sequence is given by the canonical embedding $\iota_1$ of the adjoint bundle for $E_H$ into the adjoint bundle for $E_G$.

Notice that condition (1) in the above Definition is with respect to the partial
connection on $\text{At}(E_H)/\lambda'(T{\mathcal F})$ constructed in Lemma \ref{lemma1}
and the partial connection on $\text{ad}(E_G)$ inherited from the partial connection $(E_H,\lambda)$.

If the condition (2) in the above Definition is weakened to the condition (2') requiring that $\beta$ is an 
isomorphism over on an open dense set in $M$, we get the notion of transversal {\it branched} Cartan 
geometry as defined in \cite{BD1}. Moreover, if one completely drops the condition (2) in the above 
Definition, this leads to the more general notion of {\it a transversal generalized Cartan geometry} as 
defined in \cite{BD3}.

In this framework the following Lemma was proved in \cite{BD3}.

\begin{lemma}[\cite{BD3}] \label{lem2}
A transverse generalized holomorphic Cartan geometry with model $((E_H,\, \lambda),\, \beta)$ over the foliated manifold $(M, 
\mathcal F)$ defines a canonical holomorphic connection on the principal $G$-bundle $E_G$ which is flat in the direction of 
$\mathcal F$.
\end{lemma}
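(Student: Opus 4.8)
The plan is to assemble the connection on $E_G$ from two canonical ingredients: the partial connection $\lambda$, which will govern $D$ in the directions tangent to $\mathcal F$, and the homomorphism $\beta$, which will govern the transverse directions. Both get transported from $E_H$ to $E_G$ through the canonical homomorphism of Atiyah algebroids
$$
\Theta\,\colon\,\text{At}(E_H)\,\longrightarrow\,\text{At}(E_G)
$$
induced by $H\,\hookrightarrow\, G$ and the construction \eqref{e2} of $E_G$. I will use the following standard properties of $\Theta$: it is $\mathcal O_M$--linear, it is a morphism of the sheaves of Lie algebras $\text{At}(E_H)$ and $\text{At}(E_G)$, it covers $\text{Id}_{TM}$ (i.e. it fits into a morphism from the exact sequence \eqref{c3} for $E_H$ to the one for $E_G$ which is the identity on $TM$), and its restriction to $\text{ad}(E_H)$ is the canonical embedding $\iota_1\,\colon\,\text{ad}(E_H)\,\hookrightarrow\,\text{ad}(E_G)$ of \eqref{cg1}.

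First I would construct $D$. Let $q\,\colon\,\text{At}(E_H)\,\longrightarrow\,\text{At}(E_H)/\lambda'(T\mathcal F)$ be the projection and $\iota\,\colon\,\text{ad}(E_G)\,\hookrightarrow\,\text{At}(E_G)$ the inclusion in \eqref{c3} for $E_G$, and set
$$
\Psi\,:=\,\Theta\,-\,\iota\circ\beta\circ q\,\colon\,\text{At}(E_H)\,\longrightarrow\,\text{At}(E_G).
$$
Because $\lambda$ is a partial connection we have $\text{ad}(E_H)\cap\lambda'(T\mathcal F)\,=\,0$, so $q$ restricted to $\text{ad}(E_H)$ is the embedding $\iota'$ of \eqref{at3}; the commutativity of the left square of \eqref{cg1} then gives $\beta\circ q|_{\text{ad}(E_H)}\,=\,\beta\circ\iota'\,=\,\iota_1$. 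As also $\Theta|_{\text{ad}(E_H)}\,=\,\iota_1$, the two terms of $\Psi$ agree on $\text{ad}(E_H)$, so $\Psi$ vanishes there and descends to a homomorphism $D\,\colon\,\text{At}(E_H)/\text{ad}(E_H)\,=\,TM\,\longrightarrow\,\text{At}(E_G)$. Since $d'p\circ\iota\,=\,0$ and $\Theta$ covers $\text{Id}_{TM}$, we get $d'p\circ D\,=\,\text{Id}_{TM}$, so $D$ is a holomorphic connection on $E_G$ in the sense of \eqref{c4}--\eqref{c5}; it is canonical since $\Theta$, $q$, $\iota$ and $\iota_1$ are.

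Next I would check flatness along $\mathcal F$, which comes down to identifying $D$ on $T\mathcal F$. For a local holomorphic section $v$ of $T\mathcal F$, the lift $\lambda'(v)\,\in\,\text{At}(E_H)$ satisfies $q(\lambda'(v))\,=\,0$, so $\Psi(\lambda'(v))\,=\,\Theta(\lambda'(v))$; as $\lambda'(v)$ projects to $v$, this means $D(v)\,=\,\Theta(\lambda'(v))$. Thus along $\mathcal F$ the connection $D$ is simply the image under $\Theta$ of the partial connection. Now flatness of $\lambda$ (integrability of its image, equivalently vanishing of its $\text{ad}(E_H)$--valued curvature) reads $[\lambda'(v),\,\lambda'(w)]\,=\,\lambda'([v,\,w])$ for local sections $v,\,w$ of $T\mathcal F$; since $[v,\,w]$ is again a section of $T\mathcal F$ and $\Theta$ preserves brackets,
$$
R(D)(v,\,w)\,=\,[D(v),\,D(w)]-D([v,\,w])\,=\,\Theta\big([\lambda'(v),\,\lambda'(w)]-\lambda'([v,\,w])\big)\,=\,0 .
$$
Hence the curvature of $D$ vanishes on $\bigwedge^2 T\mathcal F$, i.e. $D$ is flat in the direction of $\mathcal F$.

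The routine ingredients are the functoriality properties of $\Theta$ under extension of structure group and the dictionary between a flat partial connection along $\mathcal F$ in the sense of \eqref{at2}--\eqref{at3} and the bracket identity used above. The only point requiring genuine care is showing that $\Psi$ kills $\text{ad}(E_H)$: this is precisely where the compatibility diagram \eqref{cg1} enters, through the fact that its left square is the identity on $\text{ad}(E_H)$, so that $\beta\circ q$ and $\Theta$ induce the \emph{same} embedding $\iota_1$ of $\text{ad}(E_H)$ into $\text{ad}(E_G)$. Once that is in place, everything else is formal.
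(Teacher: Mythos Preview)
The paper does not itself prove Lemma~\ref{lem2}; it is quoted from \cite{BD3} without argument, so there is no in-paper proof to compare against. That said, your construction is exactly the standard one that appears in the cited source: via the pushout description of $\text{At}(E_G)$ over $\text{ad}(E_H)$, a homomorphism $\text{At}(E_H)\to\text{ad}(E_G)$ restricting to $\iota_1$ on $\text{ad}(E_H)$ is the same thing as a splitting of the Atiyah sequence for $E_G$, and your $\Psi=\Theta-\iota\circ\beta\circ q$ makes this explicit. The verification that $D|_{T\mathcal F}=\Theta\circ\lambda'$ (because $q\circ\lambda'=0$) and hence that $R(D)$ vanishes on $\bigwedge^2 T\mathcal F$ is correct.

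Two minor remarks. First, a harmless notational slip: you write $\Theta|_{\text{ad}(E_H)}=\iota_1$, but $\Theta$ lands in $\text{At}(E_G)$ while $\iota_1$ lands in $\text{ad}(E_G)$; the correct identity is $\Theta|_{\text{ad}(E_H)}=\iota\circ\iota_1$, and with this the cancellation in $\Psi|_{\text{ad}(E_H)}$ goes through as you intend. Second, observe that your argument nowhere uses condition~(1) in the definition (that $\beta$ is partial-connection preserving). This is consistent with the paper's presentation: immediately \emph{after} the lemma the paper invokes precisely that condition to obtain the stronger statement that $i_v\,{\rm Curv}(\nabla)=0$ for every $v\in T\mathcal F$, so that the curvature descends to $\text{ad}(E_G)\otimes\bigwedge^2\mathcal N_{\mathcal F}^*$. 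Your reading of ``flat in the direction of $\mathcal F$'' as vanishing on $\bigwedge^2 T\mathcal F$ therefore matches how the paper uses the lemma.
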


Denote by $\nabla$ the holomorphic connection constructed in Lemma \ref{lem2}. Denote by 
$$
{\rm Curv}(\nabla)\, \in\, H^0(M,\, \text{ad}(E_G)\otimes\Omega^2_M)
$$
its curvature.

The homomorphism $\beta$ being connection preserving, the contraction of ${\rm Curv}( \nabla)$ with any 
tangent vector to $T\mathcal F$ vanishes. This implies that ${\rm Curv}(\nabla)$ descends to a section of 
${\rm ad}(E_G)\otimes \bigwedge\nolimits^2 {\mathcal N}^*_{\mathcal F}$.

The transversal generalized Cartan geometry $((E_H, \lambda),\, \beta)$ is called \textit{flat} if the 
curvature tensor ${\rm Curv}(\nabla)$ vanishes identically.

Let us give here the following application of our results in \cite{BD1,BD3} and of Corollary \ref{cor1}.

\begin{theorem}\label{fol Cartan} 
Let $M$ be a compact K\"ahler manifold with trivial first Chern class. Suppose $M$ admits a nonsingular 
holomorphic foliation endowed with a transversal holomorphic Cartan geometry $(G,H)$, with $G$ being a 
complex affine algebraic group. Then the fundamental group of $M$ is infinite.
\end{theorem}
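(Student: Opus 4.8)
The plan is to argue by contradiction, assuming that $\pi_1(M)$ is finite, and to reduce to the simply connected case covered by Corollary \ref{cor1}. First I would pass to the universal cover $\widetilde{M}\longrightarrow M$, which by assumption is a finite unramified covering; hence $\widetilde{M}$ is again a compact K\"ahler manifold with trivial first Chern class, i.e.\ a compact simply connected K\"ahler Calabi-Yau manifold. The nonsingular holomorphic foliation $\mathcal F$ pulls back to a nonsingular holomorphic foliation $\widetilde{\mathcal F}$ on $\widetilde{M}$, the principal $H$-bundle $E_H$ pulls back to a principal $H$-bundle on $\widetilde{M}$ with a pulled-back flat partial connection $\widetilde{\lambda}$ along $\widetilde{\mathcal F}$, and the homomorphism $\beta$ in \eqref{beta} pulls back to a homomorphism $\widetilde{\beta}$ satisfying conditions (1)--(3); so $(\widetilde{M},\,\widetilde{\mathcal F})$ carries a transversal holomorphic Cartan geometry with the same model $(G,\,H)$.

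Next I would invoke Lemma \ref{lem2}: the transversal (generalized) Cartan geometry on $\widetilde M$ produces a canonical holomorphic connection $\widetilde\nabla$ on the extended principal $G$-bundle $\widetilde{E_G}$ over $\widetilde M$. Since $G$ is a complex affine algebraic group and $\widetilde M$ is a compact simply connected K\"ahler Calabi-Yau manifold, Corollary \ref{cor1} applies to $(\widetilde{E_G},\,\widetilde\nabla)$: the bundle $\widetilde{E_G}$ is holomorphically trivial and $\widetilde\nabla$ coincides with the trivialization, in particular $\widetilde\nabla$ is flat and has trivial monodromy.

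The final step is to extract a contradiction with the fact that $\mathcal F$ has positive codimension (a transversal Cartan geometry with model $(G,\,H)$ forces the codimension of $\mathcal F$ to equal $\dim G/H\,>\,0$, so $\mathcal{N}_{\mathcal F}\neq 0$). Because $\widetilde\nabla$ is the connection attached to the transversal Cartan geometry, its curvature ${\rm Curv}(\widetilde\nabla)$ already descends to a section of ${\rm ad}(\widetilde{E_G})\otimes\bigwedge^2\mathcal{N}^*_{\widetilde{\mathcal F}}$, and its vanishing means the transversal Cartan geometry on $\widetilde M$ is flat; hence, $\widetilde M$ being simply connected, $\widetilde{\mathcal F}$ is defined by a holomorphic submersion (developing map) $\widetilde M\longrightarrow G/H$. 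But a compact K\"ahler Calabi-Yau manifold cannot admit a holomorphic submersion onto a positive-dimensional $G/H$: the relative canonical bundle together with triviality of $c_1(\widetilde M)$ and the structure of $G/H$ are incompatible, or more directly, $\widetilde M$ being simply connected forces the existence of compact fibers and a classifying holomorphic map to $G/H$, which carries a nonzero class of positive degree, contradicting $c_1(\widetilde M)=0$ pulled back along the submersion. This last incompatibility—ruling out the transversal flat structure on the simply connected cover—is the main obstacle, and it is precisely where the hypothesis that $\mathcal F$ has positive codimension is used; the cleanest way to phrase it is to appeal directly to the flatness statement together with the nonexistence of a developing submersion $\widetilde M\to G/H$, which one knows from the fact that $G/H$ cannot be a quotient of a simply connected Calabi-Yau manifold by a holomorphic submersion. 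Tracing through Lemma \ref{lemma1} and the diagram \eqref{cg1} to see that the pulled-back data genuinely satisfy all the axioms of a transversal Cartan geometry on $\widetilde M$ is routine but must be checked.
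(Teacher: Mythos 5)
Your reduction to the simply connected case and the use of Lemma \ref{lem2} together with Corollary \ref{cor1} to obtain flatness of the transversal geometry, hence a developing submersion $\widetilde{M}\longrightarrow G/H$, is exactly the paper's route. The gap is in your final step, which you yourself flag as ``the main obstacle'': you never actually prove that a compact simply connected K\"ahler Calabi--Yau manifold cannot submerge holomorphically onto a positive-dimensional $G/H$. The sketch you offer does not work as stated. The identity $c_1(\widetilde{M})\,=\,c_1(T\widetilde{\mathcal F})+{\rm dev}^*c_1(G/H)$ only gives ${\rm dev}^*c_1(G/H)\,=\,-c_1(T\widetilde{\mathcal F})$, which has no reason to vanish; and the pullback of a positive class under a submersion with positive-dimensional fibers is degenerate, so ``carries a nonzero class of positive degree, contradicting $c_1(\widetilde M)=0$'' is not a contradiction. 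Saying that ``$G/H$ cannot be a quotient of a simply connected Calabi--Yau manifold by a holomorphic submersion'' is essentially restating the conclusion, not proving it.

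The paper closes this gap with a genuinely nontrivial chain of arguments. First, since the image of $\rm dev$ is open and closed, $G/H$ is compact and, being the holomorphic image of a K\"ahler manifold, lies in Fujiki class $\mathcal C$; the Borel--Remmert theorem then writes $G/H$ as a product of a rational manifold and a compact complex torus \cite{Fu}. Second, a Stein factorization of $\rm dev$ through a finite unramified cover $B'$ of $G/H$, combined with simple connectedness of $\widetilde{M}$, shows $\pi_1(G/H)$ is finite, killing the torus factor; hence $G/H$ is rational, in particular projective with nontrivial first Chern class. Third --- and this is the key external input you are missing --- Theorem 3.1 of Tosatti--Zhang \cite{TZ} on fibered Calabi--Yau manifolds without singular fibers forces the base of such a submersion to have trivial first Chern class, producing the contradiction. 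Without an argument of this strength (or a substitute for it), your proof is incomplete precisely at the step that carries the content of the theorem.
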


\begin{proof} 
Assume, by contradiction that the fundamental group of $M$ is finite. By replacing $M$ by its universal cover we may assume that $M$ is compact.

It was proved in \cite[Section 5.2]{BD1} that the transverse Cartan geometry is flat. This is also a 
consequence of Corollary \ref{cor1} here which implies that ${\rm Curv}(\nabla)$ vanishes (with $\nabla$ 
being the connection on $E_G$ defined in Lemma \ref{lem2}).

Since $M$ is simply connected, it follows that the developing map $\rm{dev} $ is a submersion from $M$ to 
$G/H$ (such that $T\mathcal F$ coincides with the kernel of the differential of $\rm{dev} $) 
\cite{BD1,BD3}. The fibers of this submersion must be also K\"ahler manifolds with trivial first Chern 
class. Indeed, for any fiber $F$ on the developing map we have that $c_1(F)$ coincides with the restriction 
to $F$ of $c_1(M)$.

Since $G/H$ is connected and the image of $\rm{dev}$ is open and closed (compact) it follows that $\rm{dev} 
(M)\,=\,G/H$ and $G/H$ is compact and belongs to Fujiki class $\mathcal C$ (as holomorphic image of a K\"ahler 
manifold) \cite{Fu}. Since $G/H$ is homogeneous, the Borel-Remmert theorem implies $G/H$ is a product of a 
rational manifold with a complex compact torus \cite{Fu}. Stein factorization theorem implies that the 
developing map factors through a holomorphic map $\rm{dev}' \,:\, M \longrightarrow
\, B'$ with connected fibers, where $B'$ 
is a finite unramified cover of $G/H$. Since $M$ is simply connected, $B'$ is also simply connected. This 
implies that the fundamental group of $G/H$ is finite. Hence the compact torus factor in the Borel-Remmert 
decomposition is trivial. Consequently, $G/H$ is a rational manifold.

In particular $G/H$ is projective and Theorem 3.1 in \cite{TZ} implies that $G/H$ must have trivial first Chern class: a 
contradiction.
\end{proof} 

\section*{Acknowledgements}

We thank the referee for helpful comments to improve the exposition.

%%%%%%%%%%%%%%%%%%%%%%%%%%%%%%%%%%%%%%%%%%%%%%%%%%%%%%%%%%%%%%

\end{document}